\numberwithin{equation}{section}
\newtheorem{theorem}{Theorem}[section]
\newtheorem{proposition}{Proposition}[section]
\newtheorem{lemma}{Lemma}[section]
\journal{Elsevier}
\begin{document}
\begin{frontmatter}

\title{Common substring with shifts in $b$-ary expansions}

\author[label1]{Xin Liao}
\ead{xin_liao@whu.edu.cn}
\author[label1]{Dingding Yu\corref{cor1}}
\ead{yudding_sgr@whu.edu.cn}

\address[label1]{School of Mathematics and Statistics, Wuhan University, Wuhan 430072, China}

\cortext[cor1]{corresponding author}

\begin{abstract}
Denote by $S_n(x,y)$ the length of the longest common substring of $x$ and $y$ with shifts in their first $n$ digits of $b$-ary expansions. We show that the sets of pairs $(x,y)$, for which the growth rate of $S_n(x,y)$ is $\alpha \log n$ with $0\le \alpha \le \infty$, have full Hausdorff dimension.

\end{abstract}
\begin{keyword}
$b$-ary expansions; common substring with shifts; Hausdorff dimension
\end{keyword}
\end{frontmatter}
\section{Introduction}
 Fix a positive integer $b\ge 2$. Every $x\in (0,1]$ admits a unique non-terminating $b$-ary expansion:
 $$x=\sum_{i=1}^\infty \frac{x_i}{ b^{i}}:=(x_1,x_2,\cdots),$$
 where $x_i\in \mathcal{A}:=\{0,1,\cdots,b-1\}$ for each $i\ge 1$. The infinite sequence $(x_1, x_2, \cdots)\in \mathcal{A}^{\mathbb{N}}$ is  called the digit sequence of $x$.
 
 For $(x,y)\in (0,1]\times (0,1]$ and $n\in \mathbb{N}$, the length of the longest common substring $L_n(x,y)$, and the length of the longest common substring with shifts $S_n(x,y)$ of $x$ and $y$ in the first $n$ digits of $b$-ary expansions are defined as
\begin{align*}
  L_n(x,y)= & \max \{l\ge 1: x_{i+1}^{i+l}=y_{i+1}^{i+l}~~\text{for some}~ 0\le i \le n-l\},\\
  S_n(x,y)= & \max \{l\ge 1: x_{i+1}^{i+l}=y_{j+1}^{j+l}~~\text{for some}~ 0\le i,j \le n-l\},
\end{align*}
where $x_{i+1}^{i+l}$ denotes the substring $(x_{i+1},\cdots, x_{i+l})$.

It is worth mentioning that the longest common substring problem concerns the shortest distance between two orbits of a  dynamical system $T:X\to X$ over a metric space $(X,d)$. The shortest distance between two orbits is defined as 
$$d_n(x,y)=\min_{i,j=0,\cdots,n-1}(d(T^ix,T^jy)).$$
Barros, Liao, and Rousseau \cite[Section 3]{Liao} pointed out that $S_n(x,y)\le -\log d_n(x,y) \le S_{2n}(x,y)$ for almost all pairs $(x,y)$. Therefore, $S_n(x,y)$ and $-\log d_n(x,y)$ have the same asymptotic behavior. Moreover, It is showed \cite[Theorem 7]{Liao} that if the R\'{e}nyi entropy $H_2$ exists, then for Lebesgue almost all $(x,y)\in (0,1]\times(0,1]$,
$$\lim\limits_{n\to \infty}\frac{S_n(x,y)}{\log n}=\frac{2}{H_2}.$$
Li and Yang \cite{li2019longest} proved that for Lebesgue almost all $(x,y)\in (0,1]\times(0,1]$,
$$
  \lim_{n \to \infty}\frac{L_n(x,y)}{\log_bn}=1.
$$

Then, it is natural to study the points for which $L_n(x,y)$ and $S_n(x,y)$ increase with various speeds.
For $0\le  \alpha \le \infty$, we define the level sets
\begin{equation*}
  L(\alpha)=\Big\{(x,y)\in (0,1] \times (0,1]:\lim_{n\to \infty}\frac{L_n(x,y)}{\log_{b}n}=\alpha\Big\}.
\end{equation*}
In \cite{li2019longest}, Li and Yang proved that all these level sets have full Hausdorff dimension.

However, despite a number of contributions dealing with the sets linked with $L_n(x,y)$, there is few conclusion taking $S_n(x,y)$ into account.  To fill this gap, we aim to extend Li and Yang's results to the level sets associated with $S_n(x,y)$. Different from Li and Yang's method, we mainly exploit the estimation of the spectral radius of the matrix, see Lemma \ref{spectral radius}.

For $0\le \alpha \le \infty$, we define
\begin{equation}\label{S alpha}
  S(\alpha)=\Big\{(x,y)\in (0,1] \times (0,1]:\lim_{n\to \infty}\frac{S_n(x,y)}{\log_{b}n}=\alpha\Big\}.
\end{equation}
The following result shows that all the level sets associated with $b$-ary expansions have full Hausdorff dimension.
\begin{theorem}\label{main theorem1}
  Let $S(\alpha)$ be defined as in (\ref{S alpha}). Then
  \begin{equation*}
    \dim_{\mathcal{H}}S(\alpha)=2,\qquad \forall \alpha\in [0,\infty].
  \end{equation*}
\end{theorem}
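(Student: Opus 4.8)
The plan is the standard two-sided estimate. The bound $\dim_{\mathcal H}S(\alpha)\le 2$ is trivial since $S(\alpha)\subseteq(0,1]^2$. For the reverse inequality it is enough to exhibit, for every $\varepsilon>0$, a closed subset $E\subseteq S(\alpha)$ with $\dim_{\mathcal H}E\ge 2-\varepsilon$, and then let $\varepsilon\to 0$. I would take $E=E^x\times E^y$ with $E^x,E^y\subseteq(0,1]$ defined by digit restrictions; since $E^y$ (and, as it will turn out, $E^x$) has box dimension equal to its Hausdorff dimension, one has $\dim_{\mathcal H}E=\dim_{\mathcal H}E^x+\dim_{\mathcal H}E^y$, so the task is to design the restrictions so that (i) every $(x,y)\in E$ satisfies $S_n(x,y)/\log_b n\to\alpha$, and (ii) $\dim_{\mathcal H}E^x$ and $\dim_{\mathcal H}E^y$ can be made as close to $1$ as we wish.

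The engine behind (i) is a ``forbidden word'' observation that, unlike the constraints used for $L_n$, cannot be circumvented by the shifts in $S_n$: if every factor of $x$ of length $\ge\ell$ contains the block $0^r$ while $y$ contains no occurrence of $0^r$, then no common factor of $x$ and $y$ can have length $\ge\ell$, so $S_n(x,y)<\ell$ for all $n$. For $\alpha\in(0,\infty)$ I would cut $\mathbb N$ into scales $[b^k,b^{k+1})$ and, on scale $k$: (a) force $x$ so that each of its windows of length $L_k:=\lceil\alpha k\rceil+2r$ contains $0^r$ (the ``cap'', giving $S_n\lesssim\alpha\log_b n$), keeping all other digits of $x$ free; (b) insert $0^r$-barriers across the scale boundaries of $x$, so that no long common factor can straddle a boundary, where the parameters change; (c) embed a synchronised block $W_k=1^{\lceil\alpha k\rceil}$ in both $x$ and $y$ near position $b^k$, flanked in $x$ by $0^{2r}$ (the ``floor'', giving $S_n\gtrsim\alpha\log_b n$ once $W_k$ has appeared); (d) require $y$ only to avoid $0^r$ everywhere, inserting $1$'s at the boundaries so that no run of $r$ zeros is created across them. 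A short check using the observation and the barriers then gives $\lceil\alpha k\rceil-1\le S_n<L_k$ for all $n$ in scale $k$ once $W_k$ has appeared; since $k\le\log_b n<k+1$ there and $L_k=\lceil\alpha k\rceil+2r$, we get $S_n/\log_b n\to\alpha$. For $\alpha=0$ one drops the scales: let $E^x$ consist of the $x$ each of whose windows of length $r^2$ contains $0^r$, and $E^y$ of the $y$ avoiding $0^r$; then $S_n<r^2$ for all $n$ and all $(x,y)\in E$. For $\alpha=\infty$ only a floor is needed: force a common block $W_k=1^{k^2}$ near position $b^k$ in both $x$ and $y$ and leave the rest free, so $S_n\ge k^2$ while $\log_b n\asymp k$.

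For (ii), note that $E^y$ is, apart from the sparse forced blocks (whose positions have vanishing density, hence no effect on dimension), the subshift of sequences over $\{0,\dots,b-1\}$ that avoid $0^r$; its dimension equals $\log\rho_r/\log b$, where $\rho_r$ is the spectral radius of the corresponding $r\times r$ transition matrix. This is exactly where Lemma~\ref{spectral radius} is used: it yields $\rho_r\uparrow b$ as $r\to\infty$ (concretely $\rho_r$ is the largest root of $\rho^{r}(b-\rho)=b-1$), so $\dim_{\mathcal H}E^y\to 1$. As for $E^x$, on scale $k$ the sole constraint ``every $L_k$-window contains $0^r$'' forces only $O(r/L_k)=O(r/k)$ of the digits, so the local entropy rate there is $\log b\,(1-O(r/k))$; since the scales grow geometrically this tends to $\log b$, and a mass-distribution (Billingsley) argument with the natural product measure gives $\dim_{\mathcal H}E^x=1$. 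Hence $\dim_{\mathcal H}E=1+\log\rho_r/\log b\to 2$ as $r\to\infty$, which finishes the lower bound.

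I expect the main obstacle to be the bookkeeping in the multi-scale construction for $0<\alpha<\infty$: making the cap and the floor asymptotically coincide so that the \emph{limit}, not merely the $\liminf$ or $\limsup$, of $S_n/\log_b n$ equals $\alpha$ pins down the window lengths $L_k$ and the sizes and positions of the blocks $W_k$ rather rigidly, and the zero-barriers at the scale boundaries are essential to rule out long common factors that exploit the change of parameters. A secondary point is checking that inserting the all-ones block $W_k$ does not spoil ``every $L_k$-window contains $0^r$'' (this is why $W_k$ is flanked by $0^{2r}$ and not merely $0^r$), together with the Hausdorff lower bound for the non-self-similar set $E^x$, which needs a Frostman measure whose local dimension tends to $1$ owing to the geometric growth of the scales.
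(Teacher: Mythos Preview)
Your plan is correct and follows the paper's overall strategy: exhibit a product $E^x\times E^y\subset S(\alpha)$ where $E^y$ is (up to a density--zero set of forced digits) the subshift avoiding $0^r$, whose dimension tends to $1$ via the spectral--radius estimate, while $E^x$ is built so that every sufficiently short window contains $0^r$; synchronised all--ones blocks in both coordinates supply the floor. The one substantive difference is on the $E^x$ side. You let the window length $L_k$ grow with the scale, which forces a density--zero set of digits and gives $\dim_{\mathcal H}E^x=1$ outright, at the cost of the scale--boundary barriers and the multi--scale bookkeeping you flag as the main obstacle. The paper instead keeps the period \emph{fixed}: its $E_p$ simply forces $0^{p+1}$ at positions $kp^2+1,\dots,kp^2+p+1$ for every $k\ge 0$, and then a single insertion map $f$ splices the growing $1^{\ell_k}$ blocks into both $E_p$ and $F_p$. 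The cap $S_n<\ell_{k+1}+2p^2$ is then immediate, since the only long $0^{p+1}$--free stretches in $f(E_p)$ are the inserted blocks with $O(p^2)$ padding, and Lemma~\ref{denisity} transfers $\dim_{\mathcal H}E_p$ to $f(E_p)$ without any Frostman argument. The price is that $\dim_{\mathcal H}E_p=(p^2-p-1)/p^2<1$, so the paper must send $p\to\infty$ on \emph{both} factors; the gain is that the multi--scale difficulties you anticipate simply do not arise. (A minor point: your $r\times r$ transition matrix for the ``avoid $0^r$'' shift is not a $0$--$1$ matrix, so Lemma~\ref{spectral radius} does not apply to it directly; the paper recodes over the alphabet $\{0,\dots,b^p-1\}$ to get a genuine $0$--$1$ matrix of size $b^p\times b^p$ with $pb^{p-1}-(p-1)b^{p-2}$ zeros, to which the lemma applies. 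Your explicit characteristic equation $\rho^r(b-\rho)=b-1$ of course gives $\rho_r\to b$ without the lemma.)
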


Denote
$$S(0,\infty)=\Big\{(x,y)\in (0,1] \times (0,1]:\liminf_{n\to \infty}\frac{S_n(x,y)}{\log_{b}n}=0,~\limsup_{n\to \infty}\frac{S_n(x,y)}{\log_{b}n}=\infty\Big\}.$$

We also investigate the ``sizes" of $S(\alpha)$ and $S(0,\infty)$ from a topological point of view.
\begin{theorem}\label{main theorem3}
 For $0< \alpha< \infty$, the set $S(\alpha)$ is of the first category. The set $S(0,\infty)$ is residual.
\end{theorem}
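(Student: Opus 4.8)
The plan is to derive both assertions from a single application of the Baire category theorem. Work in $X=(0,1]\times(0,1]$ with its Euclidean topology, a Baire space, and set $\mathcal{E}_0=\{(x,y)\in X:\liminf_{n\to\infty}S_n(x,y)/\log_b n=0\}$ and $\mathcal{E}_\infty=\{(x,y)\in X:\limsup_{n\to\infty}S_n(x,y)/\log_b n=\infty\}$. I will show that $\mathcal{E}_0$ and $\mathcal{E}_\infty$ are both residual in $X$. This suffices: then $S(0,\infty)=\mathcal{E}_0\cap\mathcal{E}_\infty$ is residual, which is the second claim; and for $0<\alpha<\infty$ every point of $S(\alpha)$ has $\limsup_{n}S_n(x,y)/\log_b n=\alpha<\infty$, so $S(\alpha)\subseteq X\setminus\mathcal{E}_\infty$, a meager set, whence $S(\alpha)$ is of the first category.

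The one structural fact I use is that $S_n(x,y)$ depends only on the first $n$ digits of $x$ and of $y$. Consequently every level set $\{(x,y):S_n(x,y)<t\}$ and $\{(x,y):S_n(x,y)>t\}$ is a finite union of cylinder rectangles $I_n(\mathbf{a})\times I_n(\mathbf{b})$, where $I_n(a_1,\dots,a_n)=(\sum_{i=1}^{n}a_ib^{-i},\,\sum_{i=1}^{n}a_ib^{-i}+b^{-n}]$. Such a set is not Euclidean-open, but it differs from its interior only by a finite union of axis-parallel segments, hence by a nowhere dense set; and every nonempty open subset of $X$ contains the interior of some cylinder rectangle (take a point in it neither of whose coordinates is a $b$-adic rational and pass to its cylinders of sufficiently high level). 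Writing $\mathcal{E}_0=\bigcap_{k\ge1}\bigcap_{N\ge1}\bigcup_{n\ge N}\{S_n<(\log_b n)/k\}$, it thus suffices, for each $k,N\ge1$ and each nonempty open $W\subseteq X$, to produce a nonempty open subset of $W$ on which $S_n<(\log_b n)/k$ holds for some $n\ge N$; for then $\bigcup_{n\ge N}(\{S_n<(\log_b n)/k\})^{\circ}$ is open and dense, and $\mathcal{E}_0$, which contains the countable intersection of these dense open sets, is residual. The analogous reduction applies to $\mathcal{E}_\infty=\bigcap_{k,N}\bigcup_{n\ge N}\{S_n>k\log_b n\}$.

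For $\mathcal{E}_0$, shrink $W$ to the interior of a cylinder rectangle $I_m(\mathbf{a})\times I_m(\mathbf{b})$, choose $n\ge N$ with $n>b^{km}$ so that $m<(\log_b n)/k$, and pass to the sub-rectangle $R=I_n(a_1,\dots,a_m,0,\dots,0)\times I_n(b_1,\dots,b_m,1,\dots,1)$ obtained by appending $n-m$ zeros to $\mathbf{a}$ and $n-m$ ones to $\mathbf{b}$. If $(x,y)\in R$ had a common substring $x_{i+1}^{i+l}=y_{j+1}^{j+l}$ of length $l>m$ (with $0\le i,j\le n-l$), then $m<i+l\le n$, so $x_{i+l}$ would be one of the appended zeros of $x$; hence $y_{j+l}=x_{i+l}=0$, whereas $y$ has no digit $0$ beyond its first $m$ positions, forcing $j+l\le m$ and therefore $l\le j+l\le m<l$, a contradiction. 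So $S_n(x,y)\le m<(\log_b n)/k$ on all of $R$, and since $R\subseteq I_m(\mathbf{a})\times I_m(\mathbf{b})$ its nonempty interior $R^{\circ}$ is the required open subset of $W$. For $\mathcal{E}_\infty$, shrink $W$ likewise to the interior of $I_m(\mathbf{a})\times I_m(\mathbf{b})$, choose $\ell$ so large that $\ell>k\log_b(m+\ell)$ and $m+\ell\ge N$ (possible since $\ell/\log_b(m+\ell)\to\infty$), and take $R=I_{m+\ell}(a_1,\dots,a_m,0,\dots,0)\times I_{m+\ell}(b_1,\dots,b_m,0,\dots,0)$, appending $\ell$ zeros to \emph{both} $\mathbf{a}$ and $\mathbf{b}$; then every $(x,y)\in R$ shares the length-$\ell$ substring $x_{m+1}^{m+\ell}=y_{m+1}^{m+\ell}$, so $S_{m+\ell}(x,y)\ge\ell>k\log_b(m+\ell)$ throughout $R$, and $R^{\circ}$ is again a nonempty open subset of $W$ inside $\{S_{m+\ell}>k\log_b(m+\ell)\}$. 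This establishes the two density claims, hence the residuality of $\mathcal{E}_0$ and $\mathcal{E}_\infty$, and the theorem follows. The single genuinely load-bearing step is the elementary estimate $S_n\le m$ for the ``zeros-versus-ones'' extension in the $\mathcal{E}_0$ construction; the remaining points — the passage between cylinder rectangles and the Euclidean topology, and the Baire bookkeeping — are routine, and, in contrast with the proof of Theorem~\ref{main theorem1}, no information on the spectral radius of a transfer matrix is needed here.
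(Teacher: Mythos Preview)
Your proof is correct and follows the same Baire-category strategy as the paper, but your execution is both more careful and more self-contained. The paper first argues that each $S(\alpha)$ is dense (Claim~1) by modifying finitely many digits of a point already known to lie in $S(\alpha)$---a step that implicitly relies on Theorem~\ref{main theorem1} to guarantee $S(\alpha)\neq\emptyset$---and then asserts without comment that the level sets $\{S_n<c\}$ and $\{S_n>c\}$ are open, so as to recognize $\underline{S}^*(\alpha)$ and $\overline{S}_*(\alpha)$ as dense $G_\delta$ sets. You sidestep the first issue by building the required dense open sets directly via the ``zeros-versus-ones'' and ``both-zeros'' cylinder extensions, so no appeal to Theorem~\ref{main theorem1} is needed; and you address the second by observing that these level sets are only finite unions of half-open cylinder rectangles and passing explicitly to their Euclidean interiors before invoking Baire. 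The combinatorial core---your estimate $S_n\le m$ on the zeros/ones extension---is exactly the kind of hands-on check the paper omits. Both refinements make your argument a strict improvement on the paper's, at no real cost in length.
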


The paper is organized as follows. In the next section, we compile several lemmas crucial for subsequent proofs of the main theorems. In Sections 3 and 4, we give the proofs of Theorem \ref{main theorem1} and Theorem \ref{main theorem3} respectively.

\section{Preliminaries}

To let our exposition be self-contained, before giving the proof of Theorem \ref{main theorem1}, we present some useful lemmas.

Let $\mathbb{M} $ be a subset of $\mathbb{N}$ and let $\#$ denote the cardinality of a set. We say that the set $\mathbb{M} $ is of density 0 if
$$\lim_{n\to \infty}\frac{\#\{i\in \mathbb{M}: i\le n\}}{n}=0.$$
Write $\mathbb{N} \backslash \mathbb{M}=\{n_1<n_2<\cdots\}$ and define a self-mapping $\varphi_{\mathbb{M}}$ on $(0,1]$ by
$$0.x_1x_2\cdots \mapsto 0.x_{n_1}x_{n_2}\cdots.$$
Let $J\subset (0,1]$. The following lemma describes the relation between the Hausdorff dimensions of $J$ and $\varphi_{\mathbb{M}}(J)=\{\varphi_{\mathbb{M}}(x):x\in J\}$.
\begin{lemma}\label{denisity}(\cite[Lemma 2.3]{chen2014waiting})
Suppose that the set $\mathbb{M}$ is of density zero in $\mathbb{N}$. Then, for any set $J\subset (0,1]$, we have $\dim_{\mathcal{H}}J=\dim_{\mathcal{H}}\varphi_{\mathbb{M}}(J).$
\end{lemma}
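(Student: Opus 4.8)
The plan is to establish the two inequalities $\dim_{\mathcal H}\varphi_{\mathbb{M}}(J)\le\dim_{\mathcal H}J$ and $\dim_{\mathcal H}J\le\dim_{\mathcal H}\varphi_{\mathbb{M}}(J)$ by elementary covering arguments, transporting covers by $b$-adic cylinders back and forth across $\varphi_{\mathbb{M}}$. Write $\mathbb{N}\setminus\mathbb{M}=\{n_1<n_2<\cdots\}$ (this set is infinite, since $\mathbb{M}$ has density $0$) and put $k(n)=\#\{j:n_j\le n\}$, so $k(n_j)=j$. The density-zero hypothesis says exactly that $k(n)/n\to1$, equivalently $n_j/j\to1$ as $j\to\infty$; this is the only quantitative input. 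Two combinatorial facts should be isolated first: (i) the image under $\varphi_{\mathbb{M}}$ of an order-$n$ cylinder is the order-$k(n)$ cylinder obtained by reading off the surviving digits among the first $n$ (all later digits being free), so it has diameter $b^{-k(n)}$; and (ii) the preimage under $\varphi_{\mathbb{M}}$ of an order-$j$ cylinder is a union of exactly $b^{\,n_j-j}$ cylinders of order $n_j$, since the $n_j-j$ deleted positions below $n_j$ range freely, and each piece has diameter $b^{-n_j}$. Boundary ambiguities from terminating expansions form a countable set, hence are irrelevant to Hausdorff dimension; and one may assume throughout that all covers consist of cylinders, an arbitrary set of diameter $d$ meeting only boundedly many cylinders of order $\lceil\log_b(1/d)\rceil$.

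For the upper bound, fix $s_1>\dim_{\mathcal H}J$; we may assume $s_1<1$, since otherwise $\dim_{\mathcal H}\varphi_{\mathbb{M}}(J)\le1\le s_1$ trivially. Pick $s$ with $\dim_{\mathcal H}J<s<s_1$. Given $\delta,\eta>0$, take a cover of $J$ by cylinders $I_m$ of orders $l_m$ with $b^{-l_m}<\delta$ and $\sum_m b^{-sl_m}<\eta$; for $\delta$ small all $l_m$ exceed the threshold past which $k(l)/l>s/s_1$. By (i) the sets $\varphi_{\mathbb{M}}(I_m)$ cover $\varphi_{\mathbb{M}}(J)$ and satisfy $s_1k(l_m)\ge sl_m$, so $\sum_m b^{-s_1k(l_m)}\le\sum_m b^{-sl_m}<\eta$. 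Letting $\delta,\eta\to0$ gives $\mathcal H^{s_1}(\varphi_{\mathbb{M}}(J))=0$, hence $\dim_{\mathcal H}\varphi_{\mathbb{M}}(J)\le s_1$, and then $s_1\downarrow\dim_{\mathcal H}J$.

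The reverse inequality $\dim_{\mathcal H}J\le\dim_{\mathcal H}\varphi_{\mathbb{M}}(J)$ is the step I expect to be the main obstacle, because $\varphi_{\mathbb{M}}$ is highly non-injective and pulling a cover back through it multiplies the number of cylinders by $b^{\,n_j-j}$, a factor that may tend to infinity. The cure is to raise the exponent slightly. Fix $s_1>\dim_{\mathcal H}\varphi_{\mathbb{M}}(J)$ (assume $s_1<1$, else nothing to prove) and choose $s$ with $s_1<s<1$. Since $n_j/j\to1<(1-s_1)/(1-s)$, there is $K$ with $n_j/j\le(1-s_1)/(1-s)$, equivalently $(n_j-j)-s\,n_j\le -s_1 j$, for all $j\ge K$. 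Given small $\delta,\eta$, cover $\varphi_{\mathbb{M}}(J)$ by cylinders of orders $j_m$ (all $\ge K$ once $\delta$ is small) with $\sum_m b^{-s_1 j_m}<\eta$; by (ii) the preimages give a cover of $J$ by cylinders, each of diameter $b^{-n_{j_m}}\le b^{-j_m}<\delta$, with total $s$-sum
$$\sum_m b^{\,n_{j_m}-j_m}\, b^{-s\,n_{j_m}}=\sum_m b^{\,(n_{j_m}-j_m)-s\,n_{j_m}}\le\sum_m b^{-s_1 j_m}<\eta .$$
Hence $\mathcal H^{s}(J)=0$, so $\dim_{\mathcal H}J\le s$; letting $s\downarrow s_1$ and then $s_1\downarrow\dim_{\mathcal H}\varphi_{\mathbb{M}}(J)$ completes the argument. (A pushforward/mass-distribution argument would also prove this direction, but Frostman-type statements are delicate for arbitrary, possibly non-analytic, sets $J$, so the direct covering estimate is preferable.) Note that density zero of $\mathbb{M}$ enters only through $k(n)/n\to1$ and $j/n_j\to1$, which is precisely what lets the exponent loss $s-s_1$ be taken arbitrarily small in both directions.
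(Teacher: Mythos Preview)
The paper does not prove this lemma; it is quoted verbatim from \cite{chen2014waiting} with no argument given. So there is no in-paper proof to compare your proposal against.

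That said, your proposal is a correct self-contained proof. The two combinatorial facts (i) and (ii) are exactly right, and the two covering estimates are clean: in each direction you absorb the unavoidable distortion (the ratio $k(n)/n$ or $n_j/j$) into the gap between the two exponents $s$ and $s_1$, using only that this ratio tends to $1$. The check $(n_j-j)-s\,n_j\le -s_1 j\iff n_j(1-s)\le j(1-s_1)$ is correct, and your remark that terminating expansions and the reduction to $b$-adic cylinder covers are harmless is accurate. Supplying a proof where the paper merely cites one is a strict improvement in self-containedness; your argument is essentially the standard one and would be at home in the cited source.
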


For an $n\times n$ matrix $A$ of 0's and 1's having a specified number $\tau$ of 0's, the next lemma gives a lower bound of the spectral radius of $A$ when $\tau \le \lfloor n/2\rfloor\lceil n/2\rceil$, where $\lfloor n\rfloor$ denotes the largest integer no larger than $n$, and $\lceil n\rceil$ denotes the smallest integer no smaller than $n$. Denote by $\rho(A)=\lim_{n\to \infty}\|A^n\|^{\frac{1}{n}}$  the spectral radius of the matrix $A$.

\begin{lemma}(\cite[Theorem 2.1]{brualdi1987minimum})\label{spectral radius}
Let $n$ be a positive integer and let $\tau$ be an integer with $0\le \tau\le \lfloor n/2\rfloor\lceil n/2\rceil$. Denote by $C(n,\tau)$ the class of all $n \times n$ matrices of \ 0's and 1's with exactly $\tau$ \ 0's. Then
  \begin{equation*}
   \rho(A)\geq\frac{1}{2}(n+\sqrt{n^2-4\tau}),\qquad \forall A\in C(n,\tau).
  \end{equation*}
  
\end{lemma}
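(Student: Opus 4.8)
The statement is quoted from \cite[Theorem 2.1]{brualdi1987minimum}; the route I would take to prove it is as follows. Write $A=J-B$, where $J$ is the $n\times n$ all-ones matrix and $B\in\{0,1\}^{n\times n}$ records the positions of the $\tau$ zeros of $A$, so $B$ has exactly $\tau$ ones. First note that $A$ cannot be nilpotent: a nilpotent $0$--$1$ matrix has an acyclic digraph, hence is permutation-similar to a strictly upper triangular matrix and so has at least $\binom{n+1}{2}$ zeros, whereas $\tau\le\lfloor n/2\rfloor\lceil n/2\rceil<\binom{n+1}{2}$. Therefore the spectral radius $\rho:=\rho(A)$ is strictly positive, and by the Perron--Frobenius theorem for nonnegative matrices there is a vector $v\ge 0$, $v\ne 0$, with $Av=\rho v$.

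Set $c:=\sum_i v_i>0$, so that $Jv=c\mathbf 1$, where $\mathbf 1$ is the all-ones vector. Then $Bv=Jv-Av=c\mathbf 1-\rho v$, and since $Bv\ge 0$ coordinatewise this forces $v_k\le c/\rho$ for every $k$. Summing the $k$-th coordinate identity $(Bv)_k=c-\rho v_k$ over $k$, and writing $c_j$ for the $j$-th column sum of $B$ (so $\sum_j c_j=\tau$), we get
\begin{equation*}
  c(n-\rho)=\sum_k (Bv)_k=\sum_j c_j v_j\le\frac{c}{\rho}\sum_j c_j=\frac{c\tau}{\rho}.
\end{equation*}
Dividing by $c$ yields $\rho(n-\rho)\le\tau$, i.e. $\rho^2-n\rho+\tau\ge 0$. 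Since the hypothesis on $\tau$ gives $n^2-4\tau\ge 0$, this forces either $\rho\le\tfrac12(n-\sqrt{n^2-4\tau})$ or $\rho\ge\tfrac12(n+\sqrt{n^2-4\tau})$.

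What remains is to exclude the lower branch; since $\tfrac12(n-\sqrt{n^2-4\tau})=2\tau/(n+\sqrt{n^2-4\tau})\le 2\tau/n\le n/2$, it suffices to prove $\rho\ge n/2$. This is exactly where the hypothesis $\tau\le\lfloor n/2\rfloor\lceil n/2\rceil$ is genuinely used, and it is the heart of the matter. A partial reduction is available through the digraph of $A$: one has $\rho(A)=\max_t\rho(A[C_t])$ over the strongly connected components $C_t$, and if all the sizes $s_t:=|C_t|$ were small then the block-triangular shape of the condensation would force at least $\tfrac12(n^2-\sum_t s_t^2)$ zeros, which together with $\tau\le\lfloor n/2\rfloor\lceil n/2\rceil$ yields a component with $s_t\ge n/2$. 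However, the bound is sharp — the block upper-triangular $0$--$1$ matrix with two all-ones diagonal blocks of sizes $\lceil n/2\rceil$ and $\lfloor n/2\rfloor$, all-ones above the diagonal and all-zeros below, has spectral radius exactly $\lceil n/2\rceil$ and exactly $\lfloor n/2\rfloor\lceil n/2\rceil$ zeros — so the exclusion of the lower branch cannot come from crude row-averaging; it needs a rearrangement argument showing that $\rho$ is minimized when the $\tau$ zeros are concentrated in a rectangular corner block, followed by a direct computation for that configuration. This extremal step is the substance of \cite{brualdi1987minimum} and is the part I expect to be the main obstacle; the reduction carried out in the first two paragraphs is the routine part.
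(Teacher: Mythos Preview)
The paper does not prove this lemma at all; it is simply quoted from \cite[Theorem~2.1]{brualdi1987minimum} and used as a black box. So there is no in-paper argument to compare your proposal against.

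As for the proposal itself: the first two paragraphs are correct. The nilpotency exclusion is fine (an acyclic digraph forces at least $\binom{n+1}{2}$ zeros, which exceeds $\lfloor n/2\rfloor\lceil n/2\rceil$), and the Perron eigenvector computation leading to $\rho(n-\rho)\le\tau$, hence $\rho^2-n\rho+\tau\ge 0$, is clean and valid. Your reduction ``it suffices to show $\rho\ge n/2$'' is also correct, since the lower root is at most $2\tau/n\le n/2$.

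The genuine gap is exactly the one you name: ruling out the lower branch. Your strongly-connected-component observation does yield a component of size at least $\lceil n/2\rceil$, but, as you note, this does not by itself bound the spectral radius of that block from below. The statement $\rho\ge n/2$ is in fact equivalent in difficulty to the full result at the extremal parameter $\tau=\lfloor n/2\rfloor\lceil n/2\rceil$, so the reduction buys nothing there; the Brualdi--Solheid proof proceeds instead by a rearrangement/extremal argument identifying the minimizing configuration directly, not via an intermediate $\rho\ge n/2$ step. In short, your write-up is an honest partial proof that goes further than the paper (which gives none), but it remains incomplete precisely at the substantive step, as you yourself flag.
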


The following lemma giving a refinement of the Mass Distribution Principle, is usually called Billingsley's lemma. For $x\in (0,1]$, let $I_m(x)$ denote the $m$-th generation, half-open $b$-adic interval of the form $[\frac{j-1}{b^m},\frac{j}{b^m})$ containing $x$ and $|I_m(x)|$ denote the length of the interval $I_m(x)$.
\begin{lemma}\label{Billingsley's Lemma}(\cite[Lemma 1.4.1]{probability}) Let $A \subset (0,1]$ be a Borel set and $\mu$ be a finite Borel measure on $(0,1]$. Suppose $\mu(A)>0$. If
\begin{equation*}
  \beta_1 \le \liminf_{m\to\infty}\frac{\log \mu(I_m(x))}{\log |I_m(x)|} \le \beta_2,
\end{equation*}
 for all $x\in A$, then $\beta_1 \le \dim_{\mathcal{H}}(A)\le \beta_2$.
\end{lemma}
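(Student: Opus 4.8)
The plan is to prove the two inequalities $\dim_{\mathcal{H}}(A)\ge\beta_1$ and $\dim_{\mathcal{H}}(A)\le\beta_2$ separately, first translating the hypothesis on the ratio into pointwise comparisons between $\mu(I_m(x))$ and powers of $|I_m(x)|=b^{-m}$. Since $\log|I_m(x)|=-m\log b<0$, dividing flips inequalities: the assumption $\liminf_m \frac{\log\mu(I_m(x))}{\log|I_m(x)|}\ge\beta_1$ is equivalent to saying that for every $\epsilon>0$ and every $x\in A$ one has $\mu(I_m(x))\le|I_m(x)|^{\beta_1-\epsilon}$ for all sufficiently large $m$, while $\liminf_m\frac{\log\mu(I_m(x))}{\log|I_m(x)|}\le\beta_2$ gives $\mu(I_m(x))\ge|I_m(x)|^{\beta_2+\epsilon}$ for infinitely many $m$.

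For the lower bound $\dim_{\mathcal{H}}(A)\ge\beta_1$, fix $\epsilon>0$ and set $A_k=\{x\in A:\mu(I_m(x))\le|I_m(x)|^{\beta_1-\epsilon}\text{ for all }m\ge k\}$. By the first translation $A=\bigcup_k A_k$ with the $A_k$ increasing, so since $\mu(A)>0$ there is a $k$ with $\mu(A_k)>0$. I would then run the Mass Distribution Principle on $A_k$: given any set $U$ of small diameter, choose $m$ with $b^{-m-1}\le|U|<b^{-m}$; then $U$ meets at most two adjacent $m$-th generation $b$-adic intervals, each of which (when it meets $A_k$ and $m\ge k$) satisfies the mass bound, whence $\mu(U)\le 2(b^{-m})^{\beta_1-\epsilon}\le C|U|^{\beta_1-\epsilon}$ for a constant $C$ depending only on $b,\beta_1,\epsilon$. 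Summing over any small-diameter cover of $A_k$ yields $\mathcal{H}^{\beta_1-\epsilon}(A_k)\ge\mu(A_k)/C>0$, so $\dim_{\mathcal{H}}(A)\ge\dim_{\mathcal{H}}(A_k)\ge\beta_1-\epsilon$; letting $\epsilon\to0$ finishes this direction.

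For the upper bound $\dim_{\mathcal{H}}(A)\le\beta_2$, fix $\epsilon>0$ and $M\in\mathbb{N}$. By the second translation, the family $\mathcal{F}_M$ of all $b$-adic intervals $I_m(x)$ with $x\in A$, $m\ge M$, and $\mu(I_m(x))\ge|I_m(x)|^{\beta_2+\epsilon}$ covers $A$. Exploiting that any two $b$-adic intervals are either nested or disjoint, I would pass to the subfamily of maximal intervals $\{I^{(i)}\}$, which are pairwise disjoint and still cover $A$. Each has diameter at most $b^{-M}$ and satisfies $|I^{(i)}|^{\beta_2+\epsilon}\le\mu(I^{(i)})$, so
\begin{equation*}
\sum_i|I^{(i)}|^{\beta_2+\epsilon}\le\sum_i\mu(I^{(i)})=\mu\Big(\bigcup_i I^{(i)}\Big)\le\mu((0,1])<\infty.
\end{equation*}
Since this bound is uniform in $M$ while the diameters tend to $0$ as $M\to\infty$, we obtain $\mathcal{H}^{\beta_2+\epsilon}(A)\le\mu((0,1])<\infty$, hence $\dim_{\mathcal{H}}(A)\le\beta_2+\epsilon$; letting $\epsilon\to0$ completes the proof.

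The main obstacle is the lower bound, and more precisely the two technical gaps in applying the Mass Distribution Principle. First, the hypothesis is only a pointwise $\liminf$, so the threshold $m\ge k$ is not uniform over $A$; this forces the decomposition $A=\bigcup_k A_k$ together with continuity of $\mu$ from below to isolate a piece of positive measure. Second, Hausdorff dimension is defined through arbitrary covers whereas the hypothesis only controls $b$-adic intervals, so one must absorb the passage from a general small set $U$ to the at most two $b$-adic intervals it meets into the constant $C$. The upper bound is comparatively routine once the nesting property of $b$-adic intervals is used to disjointify the cover.
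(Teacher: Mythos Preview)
The paper does not supply its own proof of this lemma; it is quoted verbatim as \cite[Lemma 1.4.1]{probability} and used as a black box. Your argument is the standard proof one finds in that reference: the lower bound via the Mass Distribution Principle after stratifying $A$ into the sets $A_k$ on which the decay is uniform, and the upper bound via an efficient disjoint $b$-adic cover extracted from the intervals where $\mu(I_m(x))\ge|I_m(x)|^{\beta_2+\epsilon}$.

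One small imprecision worth tightening: in the lower bound you write $\mu(U)\le 2(b^{-m})^{\beta_1-\epsilon}$, but this need not hold, since one of the two $b$-adic intervals meeting $U$ may fail to intersect $A_k$ and could carry large $\mu$-mass. What you actually need (and what suffices) is the bound for the restricted measure $\mu|_{A_k}$: if an interval meets $A_k$ it obeys the mass bound, and if it misses $A_k$ its $\mu|_{A_k}$-measure is zero, so $\mu|_{A_k}(U)\le 2(b^{-m})^{\beta_1-\epsilon}\le C|U|^{\beta_1-\epsilon}$. Summing over a cover then gives $\sum_i|U_i|^{\beta_1-\epsilon}\ge C^{-1}\mu(A_k)$ as you claim. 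With that fix the proof is complete and matches the cited source.
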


\section{Proof of Theorem \ref{main theorem1}}
Theorem \ref{main theorem1} follows from a series of propositions.

Let $p \geq 2$ be a positive integer. Define
\begin{equation}\label{Ep}
  E_p:=\{x\in (0,1] : x_i=0, ~\text{if} ~
  i=kp^2+j ~\text{for some  } k\in \mathbb{N}~\text{and}~ j\in [1, p+1]
  \}.
\end{equation}
We obtain the Hausdorff dimension of $E_p$ by  the above Billingsley's lemma immediately.
\begin{proposition}\label{Lemma E_p}
Let $E_p$ be defined as in (\ref{Ep}). Then for any $p\ge 2$,
 $$ \dim_{\mathcal{H}}(E_p) =\frac{p^2-(p+1)}{p^2}.$$
\end{proposition}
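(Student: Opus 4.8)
The plan is to apply Billingsley's lemma (Lemma \ref{Billingsley's Lemma}) to the set $E_p$, with the natural candidate measure $\mu$ supported on $E_p$. Since $E_p$ is a Cantor-type set built by forcing the digits $x_i$ to be $0$ whenever $i$ falls in one of the "blocked" intervals $[kp^2+1, kp^2+p+1]$ (a block of $p+1$ consecutive positions inside each window of length $p^2$), and leaving the remaining $p^2-(p+1)$ positions per window free to take any of the $b$ values, the set is a homogeneous self-similar-type construction. I would define $\mu$ to be the measure that assigns equal mass to each cylinder of a given generation: concretely, on each "free" position there are $b$ equally likely choices and on each "blocked" position there is a unique (forced) choice, so $\mu$ is the pushforward of the uniform product measure on $\prod \{0,\dots,b-1\}$ over the free coordinates.

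Next I would estimate $\mu(I_m(x))$ for $x\in E_p$. Writing $m = kp^2 + r$ with $0\le r < p^2$, the number of free positions among the first $m$ indices is $F(m) := k\big(p^2-(p+1)\big) + \max\{0,\, \min(r,\,?)\}$ — more precisely, $F(m)$ counts indices $i\le m$ with $i \not\equiv j \pmod{p^2}$ for $j\in[1,p+1]$. The cylinder $I_m(x)$ (intersected with $E_p$, and as a $b$-adic interval it has length $b^{-m}$) satisfies $\mu(I_m(x)) = b^{-F(m)}$. Hence
\begin{equation*}
  \frac{\log \mu(I_m(x))}{\log |I_m(x)|} = \frac{-F(m)\log b}{-m\log b} = \frac{F(m)}{m}.
\end{equation*}
Since $F(m)/m \to \big(p^2-(p+1)\big)/p^2$ as $m\to\infty$ (the correction from the incomplete last window is $O(1/m)$), the limit exists and equals $\frac{p^2-(p+1)}{p^2}$ for every $x\in E_p$. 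Taking $\beta_1 = \beta_2 = \frac{p^2-(p+1)}{p^2}$ in Billingsley's lemma, and noting $\mu(E_p) = 1 > 0$, yields $\dim_{\mathcal H}(E_p) = \frac{p^2-(p+1)}{p^2}$.

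The only genuinely careful point — and it is routine rather than hard — is bookkeeping the count $F(m)$ of free positions up to an arbitrary $m$ and confirming that the ratio $F(m)/m$ converges (rather than merely having lim inf/lim sup) to the claimed value; this follows because in each complete block of $p^2$ consecutive indices exactly $p^2-(p+1)$ are free, so $F(m) = \frac{p^2-(p+1)}{p^2}\,m + O(1)$ uniformly. One should also check the minor technicality that the $b$-adic interval $I_m(x)$ has length exactly $b^{-m}$ and that $\mu$ restricted to it has the stated mass, which is immediate from the product structure of $\mu$. No other obstacle arises.
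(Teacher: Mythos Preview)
Your proposal is correct and follows essentially the same approach as the paper: define the uniform product measure on the free digits, compute $\mu(I_m(x))=b^{-F(m)}$ with $F(m)=\#(J\cap\{1,\dots,m\})$ the count of free positions, observe that $F(m)/m\to (p^2-(p+1))/p^2$, and apply Billingsley's lemma. The only cosmetic difference is notation (the paper names the free-index set $J$ rather than introducing $F(m)$), and your aside about bookkeeping $F(m)$ via $m=kp^2+r$ is exactly the routine check the paper glosses over with ``obviously.''
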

\begin{proof}
Denote $J=\{kp^2+j: k\in \mathbb{N}~\text{and}~ j\in [p+2, p^2]\}$. Then $E_p$ can be expressed as
\begin{equation}\label{1.4.5}
  E_p=\Big\{x\in(0,1]: x=\sum_{i\in J}\frac{x_i}{ b^{i}},~x_i\in  \mathcal{A} \Big\}.
\end{equation}
The set $E_p$ is covered by exactly $b^{\#(J\cap\{1,\cdots,m\})}$ many closed $b$-adic intervals of generation $m$. Let $\mu$ be the probability measure on $E_p$ that assigns equal measure to the $m$-th generation covering intervals. That is, for any $m\ge 1$, we have
$$\mu(I_m(x))=b^{-\#(J\cap\{1,\cdots,m\})}.$$ 
 This measure makes the digits $\{x_i\}_{i\in J}$ in (\ref{1.4.5}) to be independent and identically distributed uniform random bits. For any $x\in E_p$,
$$\frac{\log \mu(I_m(x))}{\log |I_m(x)|}=\frac{\log b^{-\#(J\cap\{1,\cdots,m\})}}{\log b^{-m}}=\frac{\#(J\cap\{1,\cdots,m\})}{m}.$$
Obviously, $\lim\limits_{m\to \infty}\frac{\#(J\cap\{1,\cdots,m\})}{m}=\frac{p^2-(p+1)}{p^2}.$ Then by Billingsley's lemma \ref{Billingsley's Lemma}, we obtain
$$\dim_{\mathcal{H}}E_p=\lim_{m\to \infty} \frac{\log \mu(I_m(x))}{\log |I_m(x)|}=\frac{p^2-(p+1)}{p^2}.$$
\end{proof}

Define
\begin{equation}\label{Fp}
 F_p:= \{x\in (0,1] :  x_{i}^{i+p}\neq 0^{p+1}, \forall i\ge 1 \},
\end{equation}
where $0^{p+1}$ denotes the substring $(0,0,\cdots,0)$ with $p+1$ zeros.
The following lemma gives the Hausdorff dimension of $F_p$.
\begin{proposition}\label{Lemma F_p}
Let $F_p$ be defined as in (\ref{Fp}). Then for any $p\ge 2$,
\begin{equation*}
    \dim_{\mathcal{H}}F_p=\frac{\log \rho(A)}{\log b^p},
  \end{equation*}
  where $A$ is a matrix of size $b^p \times b^p $, with $pb^{p-1}-(p-1)b^{p-2}$ zeros and all other entries equal to one.
\end{proposition}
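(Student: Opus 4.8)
The plan is to recognise $F_p$ as the coding space of a subshift of finite type over the alphabet $\mathcal{A}^p$ of length-$p$ blocks, to identify its transition matrix with the matrix $A$ of the statement, and then to read off the Hausdorff dimension by the classical recipe: a cylinder covering for the upper bound, and Billingsley's lemma (Lemma~\ref{Billingsley's Lemma}) carried by the Parry measure for the lower bound.

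First I would set up the coding. Cut the digit sequence of $x$ into consecutive length-$p$ blocks $u_k(x)=x_{(k-1)p+1}^{kp}\in\mathcal{A}^p$, and for $u\in\mathcal{A}^p$ let $z(u)$ and $w(u)$ be the lengths of the longest zero run that is, respectively, a suffix and a prefix of $u$ (so $z(u)=p$ exactly when $u=0^p$). Two remarks make the coding work: every window $x_i^{i+p}$ of $p+1$ consecutive digits lies inside $u_k(x)u_{k+1}(x)$ for $k=\lceil i/p\rceil$, and the length-$2p$ word $u_k(x)u_{k+1}(x)$ contains the factor $0^{p+1}$ precisely when $z(u_k(x))+w(u_{k+1}(x))\ge p+1$ (a run of $p+1$ zeros cannot fit in a single block, so it must straddle the junction, where the longest zero run has length $z(u_k)+w(u_{k+1})$). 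Hence $x\in F_p$ iff $z(u_k(x))+w(u_{k+1}(x))\le p$ for all $k$, i.e.\ iff $(u_k(x))_{k\ge1}$ is an admissible path of the $b^p\times b^p$ matrix $A$ given by $A_{uv}=1$ when $z(u)+w(v)\le p$ and $A_{uv}=0$ otherwise. Terminating expansions are excluded by the non-terminating convention and in any case form a dimension-$0$ set, so the block coding is a bijection between $F_p$ and the one-sided SFT $X_A$.

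Next I would verify the matrix data. Since $\#\{u\in\mathcal{A}^p:z(u)=j\}=(b-1)b^{p-1-j}$ for $0\le j\le p-1$ and $=1$ for $j=p$ (and symmetrically for $w$), summing over the pairs $(j,k)$ with $j+k\ge p+1$ gives that $A$ has exactly $pb^{p-1}-(p-1)b^{p-2}$ zeros, as claimed. Taking $u^\ast=(b-1)^p$ we have $z(u^\ast)=w(u^\ast)=0$, so $A_{u^\ast v}=A_{uu^\ast}=A_{u^\ast u^\ast}=1$ for all $u,v$; hence $A$ is primitive and, by Perron--Frobenius, $\rho:=\rho(A)$ is a simple dominant eigenvalue with strictly positive left and right eigenvectors $\mathbf L,\mathbf R$, and $1<\rho<b^p$ (e.g.\ $A$ has an all-ones principal submatrix of size $(b-1)b^{p-1}\ge2$, on the blocks ending in a nonzero digit). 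For the upper bound, a generation-$m$ $b$-adic interval meets $F_p$ only if its digit string $x_1^m$ has no factor $0^{p+1}$; cutting such strings into blocks shows their number $\widetilde N_m$ satisfies $\widetilde N_{kp}\le\mathbf 1^{\top}A^{k-1}\mathbf 1\le C\rho^{k}$, so $\widetilde N_m\le C'\rho^{m/p}$, and covering $F_p$ by these intervals of length $b^{-m}$ gives $\mathcal H^{s}(F_p)\le\liminf_m\widetilde N_m b^{-ms}=0$ whenever $s>\frac{\log\rho}{p\log b}$; thus $\dim_{\mathcal H}F_p\le\frac{\log\rho(A)}{\log b^{p}}$. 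For the lower bound I would push forward the Parry measure of $X_A$, namely the Markov measure $\mu$ with stationary law $\pi_u\propto L_uR_u$ and transition probabilities $P_{uv}=A_{uv}R_v/(\rho R_u)$; it is supported on all of $F_p$, and since the product of the $A_{uv}$ along an admissible path equals $1$ while $\pi$ and $\mathbf R$ take only finitely many positive values, $\mu(I_{kp}(x))\asymp\rho^{-k}$ uniformly on $F_p$. Monotone interpolation over the residue $m-p\lfloor m/p\rfloor$ then yields
\[
  \lim_{m\to\infty}\frac{\log\mu(I_m(x))}{\log|I_m(x)|}=\frac{\log\rho}{p\log b}\qquad\text{for every }x\in F_p,
\]
and Billingsley's lemma applied with the Borel set $F_p$ gives $\dim_{\mathcal H}F_p\ge\frac{\log\rho(A)}{\log b^{p}}$, completing the proof.

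The only genuinely computational step is the zero-count above, which must reproduce the exact constant $pb^{p-1}-(p-1)b^{p-2}$; the rest is the standard symbolic-dynamics dimension formula. The role of primitivity is precisely to make the Parry measure available and to supply both the clean estimate $\mathbf 1^{\top}A^{k-1}\mathbf 1\le C\rho^{k}$ for the covering and the uniform two-sided bound $\mu(I_{kp}(x))\asymp\rho^{-k}$ needed for Billingsley; one could avoid Perron--Frobenius by using only $\mathbf 1^{\top}A^{k-1}\mathbf 1\le C_\varepsilon(\rho+\varepsilon)^{k}$ in the upper bound together with a normalised counting measure on admissible length-$kp$ blocks in the lower bound, at the cost of a somewhat longer argument.
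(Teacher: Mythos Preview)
Your proposal is correct and follows essentially the same route as the paper: both code $F_p$ as a subshift of finite type over the alphabet $\mathcal{A}^p$ by cutting digit sequences into length-$p$ blocks, identify the transition matrix $A$ with the stated zero count, and then invoke the standard SFT dimension formula $\dim_{\mathcal H}F_p=\log\rho(A)/\log b^p$. The only differences are cosmetic: the paper counts the forbidden pairs by enumerating the $p$ possible starting positions of $0^{p+1}$ inside a length-$2p$ window (getting $b^{p-1}+(p-1)(b-1)b^{p-2}$ via an inclusion--exclusion style tally), whereas you organise the same count through the suffix/prefix zero-run lengths $z(u),w(v)$; and the paper simply cites \cite[Example~1.3.3]{probability} for the dimension formula, while you spell out the Perron--Frobenius/Parry-measure argument and feed it into Billingsley's lemma.
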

\begin{proof}
 For any $i\ge 1$ and $x_i^{2p+i-1}\in \mathcal{A}^{2p}$, there exist non-negative integers $0\leq x'_i, x'_{i+1} \leq b^p-1$ such that $\sum\limits_{j=i}^{2p+i-1}\frac{x_j}{b^j}=\frac{x'_i}{b^{ip}}+\frac{x'_{i+1}}{b^{2ip}}$, which means every $2p$ digits in $b$-ary expansion can be expressed as 2 digits in $b^p$-ary expansion.  Without loss of generality, we can assume $i=1$.

  \begin{itemize}
     \item[(1)]  If $x_1^{p+1}=0^{p+1}$, then $x'_1=0$ and $x'_2\in\{0,\cdots b^{p-1}-1\}$. Thus $(x'_1, x'_2)$ has $b^{p-1}$ values.
     \item[(2)] If $x_2^{p+2}=0^{p+1}$, then $x'_1\in \{0,\cdots b-1\}$ and $x'_2\in\{0,\cdots b^{p-2}-1\}$, when $x'_1=0$, the values of $(x'_1, x'_2)$ have already been counted in case (1), so there are $(b-1)b^{p-2}$ values for $(x'_1, x'_2)$, which are different from case (1).\par
         \dots
     \item[(3)] For any $2\le k < p$, if $x_{k+1}^{p+k+1}=0^{p+1}$, then $x'_1\in \{0,\cdots b^k-1\}$ and $x'_2\in\{0,\cdots b^{p-k-1}-1\}$. When $x'_1\in \{0,\cdots b^{k-1}-1\}$, the values of $(x'_1, x'_2)$ have been appeared in the previous cases, so there are $(b^k-b^{k-1})b^{p-k-1}=(b-1)b^{p-2}$ distinct values for $(x'_1, x'_2)$.
   \end{itemize}
   If there exists $0^{p+1}$ in $x_1\cdots x_{2p}$, then from the preceding discussion, we deduce that $(x'_1, x'_2)$ can take $pb^{p-1}-(p-1)b^{p-2}$ distinct values. Denote the set of these $(x'_1, x'_2)$ by $\Lambda((x'_1, x'_2))$. Then 
   $$\#\Lambda((x'_1, x'_2))=pb^{p-1}-(p-1)b^{p-2}.$$ 
   
   Define a $b^p \times b^p$ matrix $A=(A_{ij}),\ 0\le i,j\le b^p-1$ as follows:
  \begin{equation*}
  A_{ij}=
    \begin{cases}
      0, & \mbox{if } (i,j)\in \Lambda((x'_1, x'_2)); \\
      1, & \mbox{otherwise}.
    \end{cases}
  \end{equation*}
  Then, the number of 0's in matrix $A$ is $pb^{p-1}-(p-1)b^{p-2}$.
Hence, $F_p$ can be expressed as
\begin{equation*}
  F_p=\Big\{x\in (0,1]: x=\sum_{i=1}^{\infty}\frac{x'_i(x)}{b^{ip}}, ~ A_{x'_{i}x'_{i+1}}=1,\ \mbox{for all}~i\Big\}.
\end{equation*}
Using the theory of shifts of finite types (see \cite[Example 1.3.3]{probability} for more details), we can calculate the Hausdorff dimension of $F_p$:
  \begin{equation*}
    \dim_{\mathcal{H}}F_p=\frac{\log \rho(A)}{\log b^p}.
  \end{equation*}
\end{proof}

\begin{proof}[Proof of Theorem \ref{main theorem1}]
First, let us focus on the case $0\le \alpha<\infty$.

For $k\ge 1$, let $m_k=p^2\cdot2^k,~ \ell_k=\lfloor\alpha(k+1)\log_b2\rfloor ~\text{and}~t_k=\sum\limits_{i=1}^{k}(m_i+\ell_i).$
Define self-mapping $f_k$ on $ (0,1]$ by
\begin{align*}
  f_1(x)= & \sum_{i=1}^{m_1}\frac{x_i}{b^i}
+\sum_{i=m_1+1}^{t_1}\frac{1}{b^i}+\sum_{i=t_1+1}^{\infty}
\frac{x_{i-\ell_1}}{b^i} \\
   & \ldots \\
  f_k(x)= & \sum_{i=1}^{t_{k-1}}\frac{x_i}{b^i}
+\sum_{i=t_{k-1}+1}^{t_{k-1}+\ell_k}\frac{1}{b^i}+\sum_{i=t_{k-1}+\ell_k+1}^{\infty}
\frac{x_{i-\ell_k}}{b^i}.
\end{align*}
Then, for any $x\in (0,1]$, the limit
\begin{equation}\label{f}
f(x):=\lim_{k\to\infty} f_k(f_{k-1}(\ldots f_1(x))),
\end{equation}
exists.

For each $x\in f(E_p)$ and $y\in f(F_p)$, if $t_k \le n < t_{k+1}$, then
 $$\ell_k \le S_n(x,y)< \ell_{k+1}+2p^2.$$
 It follows that
 $$\lim_{n\to \infty}\frac{S_n(x,y)}{\log_bn}=\alpha,$$
 and hence $$f(E_p)\times f(F_p) \subset S(\alpha).$$
 Therefore, using \cite[Product formula 7.2]{falconer2004fractal}, we have
 \[\dim_{\mathcal{H}}S(\alpha)\geq \dim_{\mathcal{H}}f(E_p)+ \dim_{\mathcal{H}}f(F_p).\]
 On the other hand, since $\lim\limits_{k\to \infty}\frac{\sum_{i=1}^{k}\ell_i}{t_k}= 0$, the assumption of Lemma \ref{denisity} is satisfied for the sets $E_p$ and $F_p$. Hence, we obtain
 $$\dim_{\mathcal{H}} f(E_p)=\dim_{\mathcal{H}} E_p,$$
 $$\dim_{\mathcal{H}} f(F_p)=\dim_{\mathcal{H}} F_p.$$
 Therefore, utilizing Proposition \ref{Lemma E_p} and \ref{Lemma F_p}, we have
 $$\dim_{\mathcal{H}}S(\alpha)\geq \frac{p^2-p-1}{p^2}+ \frac{\log \rho(A)}{\log b^p},$$
 where $A$ is defined as in Proposition \ref{Lemma F_p}. By Lemma \ref{spectral radius}, we have
 \[ \rho(A) \geq  \frac{1}{2}\big(b^p+\sqrt{b^{2p}-4(pb^{p-1}-(p-1)b^{p-2})}\big). \]
 Taking $p\to \infty$ yields
 $$\dim_{\mathcal{H}}S(\alpha)\geq  \lim_{p\to \infty}\frac{p^2-p-1}{p^2}+ \frac{\log \rho(A)}{\log b^p}=2.$$
 Thus, we have shown the desired result for $0\leq \alpha <\infty$.

As for $\alpha=\infty$, the proof needs to be modified accordingly. For this case, we define $m_k=p^2\cdot 2^k$ and $\ell_k=k^2$ for $k\geq 1$. Then, the rest of the proof proceeds as before.

\end{proof}

\section{Proof of Theorem \ref{main theorem3}}

  For $\alpha\in [0,\infty]$, set
  \begin{align*}
    \underline{S}^*(\alpha)= & \Big\{(x,y)\in (0,1]^2:\liminf_{n\to \infty}\frac{\log S_n(x,y)}{\log_b n}\le \alpha\Big\}, \\
    \overline{S}_{*}(\alpha)= & \Big\{(x,y)\in (0,1]^2:\limsup_{n\to \infty}\frac{\log S_n(x,y)}{\log_b n}\ge \alpha\Big\}.
  \end{align*}
  
The proof of Theorem \ref{main theorem3} consists of several claims.

\textbf{Claim 1}:
    For any $0\le \alpha\le\infty$, $S(\alpha)$ is dense in $(0,1]^2$.

  \begin{proof}
    For any $\alpha\in [0,\infty]$, there exists $(x',y')\in (0,1]^2$ such that
    $$\lim_{n\to \infty}\frac{\log S_n(x',y')}{\log_b n}=\alpha.$$
    For any $(x,y)\in (0,1]^2$, we can find a sequence of points in $S(\alpha)$,
    \begin{align*}
      \widetilde{x}_k:= & \sum_{i=1}^{k}\frac{x_i}{b^i} +\sum_{i=k+1}^{\infty}\frac{x'_i}{b^i}, \\
      \widetilde{y}_k:= & \sum_{i=1}^{k}\frac{y_i}{b^i}
      +\sum_{i=k+1}^{\infty}\frac{y'_i}{b^i},
    \end{align*}
    such that $(\widetilde{x}_k,\widetilde{y}_k)\to (x,y)$ as $n\to \infty$. Indeed, for any $k\ge 1$, $\widetilde{x}_k(\widetilde{y}_k,\text{respectively})$ and $x'(y',\text{respectively})$ differ only in finitely many digits. Then
    $$\lim_{n\to \infty}\frac{\log S_n(\widetilde{x}_k,\widetilde{y}_k)}{\log_b n}=\lim_{n\to \infty}\frac{\log S_n(x',y')}{\log_b n}=\alpha.$$
     Thus $S(\alpha)$ is dense in $(0,1]^2$. 
    
  \end{proof}


  \textbf{Claim 2}:
    For any $0<\alpha<\infty$, $\underline{S}^*(\alpha)$ and $\overline{S}^*(\alpha)$ are residual.
  
  \begin{proof}
    Let $\alpha \in (0,\infty)$ be fixed. From Claim 1, we can deduce that $\underline{S}^*(\alpha)$ and $\overline{S}^*(\alpha)$ are dense. Since $(0,1]^2$ is a Baire space, it suffices to show that $\underline{S}^*(\alpha)$ and $\overline{S}^*(\alpha)$ are $G_{\delta}$ sets.
    
    We observe that, for any $k>0$,
    $$\underline{S}^*(\alpha)=\bigcap_{k=1}^{\infty}\bigcap_{N=1}^{\infty}\bigcup_{n=N}^{\infty}B_n(\alpha,k),$$
    and
    $$\overline{S}^*(\alpha)=\bigcap_{k=\lfloor 1/\alpha\rfloor+1}^{\infty}\bigcap_{N=1}^{\infty}\bigcup_{n=N}^{\infty}\hat{B}_n(\alpha,k),$$
    where $B_n(\alpha,k)$ and $\hat{B}_n(\alpha,k)$ are defined by
    $$B_n(\alpha,k):=\{(x,y)\in (0,1]^2:S_n(x,y)< n^{\alpha+1/k}\},$$
    and
    $$\hat{B}_n(\alpha,k):=\{(x,y)\in (0,1]^2:S_n(x,y)> n^{\alpha-1/k}\}.$$
    All non-empty sets $B_n(\alpha,k)$ and $\hat{B}_n(\alpha,k)$ are  open sets, implying that $\underline{S}^*(\alpha)$ and $\overline{S}^*(\alpha)$ are $G_{\delta}$ sets.

\end{proof}

\begin{proof}[Proof of Theorem \ref{main theorem3}]
  For any $K\in \mathbb{N}$, we have
  $$\underline{S}^*(0)=\bigcap_{K=1}^{\infty}\underline{S}^*(1/K)\quad \text{and}\quad \overline{S}^*(\infty)=\bigcap_{K=1}^{\infty}\overline{S}^*(K).$$
   From Claim 2, we conclude that $\underline{S}^*(1/K)$ and $\overline{S}^*(K)$ are residual. Then $\underline{S}^*(0)$ and $\overline{S}^*(\infty)$ are residual. Hence the set
   $$S(0,\infty)=\underline{S}^*(0)\bigcap \overline{S}^*(\infty)$$
   is residual.
   
   For any $0< \alpha <\infty$, we have
   $$S(\alpha)\subset \Big(\underline{S}^*(0)\bigcap \overline{S}^*(\infty)\Big)^{c}.$$
    By the definition of the set of the first category, we have $S(\alpha)$ is of first category.
\end{proof}

\section*{Acknowledgments}
The authors wish to express their sincere appreciation to Professor Lingmin Liao who critically read the paper and made numerous helpful suggestions.


\begin{thebibliography}{999}
\bibitem{Liao}
Barros, Vanessa and Liao, Lingmin and Rousseau, J{\'e}r{\^o}me. On the shortest distance between orbits and the longest common substring problem. Advances in Mathematics, 344: 311--339, 2019.

\bibitem{li2019longest}
Li, Jinjun and Yang, Xiangfeng. On longest matching consecutive subsequence. International Journal of Number Theory, 15(08): 1745--1758, 2019.


\bibitem{chen2014waiting}
Chen, Haibo and Tang, Junmin. The waiting spectra of the sets described by the quantitative waiting time indicators. Science China Mathematics, 57(11): 2335--2346, 2014.

\bibitem{brualdi1987minimum}
Brualdi, Richard A and Solheid, Ernie S. On the minimum spectral radius of matrices of zeros and ones. Linear Algebra and its Applications, 85: 81--100, 1987.

\bibitem{probability}
Bishop, Christopher J and Peres, Yuval. Fractals in probability and analysis. Cambridge University Press, 162, 2017.





\bibitem{falconer2004fractal}
Falconer, Kenneth. Fractal geometry: Mathematical Foundations and Applications. John Wiley \& Sons, 2004.



\end{thebibliography}
\end{document}